\documentclass[12pt]{amsart}
\usepackage{amsmath}
\usepackage{amstext}
\usepackage{amsfonts}
\usepackage{amssymb}
\usepackage{amsthm}
\usepackage{amsrefs}
\usepackage{bbm}

\usepackage{microtype}
\usepackage[colorlinks=true, allcolors=blue]{hyperref}

\theoremstyle{plain}
\newtheorem{thm}{Theorem}[section]

\newtheorem{cor}[thm]{Corollary}
\newtheorem{prop}[thm]{Proposition}
\newtheorem{lem}[thm]{Lemma}

\theoremstyle{definition}
\newtheorem{defn}[thm]{Definition}
\newtheorem{exam}[thm]{Example}
\newtheorem{rem}[thm]{Remark}
\newtheorem*{question*}{Question}

\newtheorem{thmintro}{Theorem}

\newcommand{\bF}{{\mathbb{F}}}

\newcommand{\bR}{{\mathbb{R}}}

\newcommand{\bZ}{{\mathbb{Z}}}

\newcommand{\A}{{\mathcal{A}}}

\renewcommand{\S}{{\mathcal{S}}}

\newcommand{\U}{{\mathcal{U}}}
\newcommand{\V}{{\mathcal{V}}}
\newcommand{\W}{{\mathcal{W}}}

\newcommand{\rC}{{\mathrm{C}}}

\renewcommand{\phi}{\varphi}

\newcommand{\ca}{\mathrm{C}^*}

\newcommand{\fb}{\partial_F}

\begin{document}
\title[Simplicity of reduced crossed products]{Simplicity of reduced crossed products}

\author[T. Bray]{Thomas Bray}
\address{Department of Pure Mathematics\\ University of Waterloo\\
Waterloo, Ontario \; N2L 3G1 \\Canada}
\email{tgrbray@uwaterloo.ca}

\author[M. Kennedy]{Matthew Kennedy}
\address{Department of Pure Mathematics\\ University of Waterloo\\
Waterloo, Ontario \; N2L 3G1 \\Canada}
\email{matt.kennedy@uwaterloo.ca}

\begin{abstract}
We characterize the simplicity of reduced crossed product C*-algebras in terms of stabilizer subgroups. Specifically, we prove that if $G$ is a countable group and $X$ is a minimal $G$-flow, then the reduced crossed product C*-algebra $\rC(X) \times_\lambda G$ is simple if and only if there is a point in $X$ with a C*-simple stabilizer subgroup. Further, these conditions are equivalent to a generic point in $X$ having a C*-simple stabilizer subgroup. We also provide an example demonstrating that this result does not extend to uncountable groups. This completely resolves a question of Ozawa.
\end{abstract}

\thanks{First author partially supported by an OGS Graduate Scholarship (Ontario, Canada). Second author partially supported by an NSERC Discovery Grant (Canada). }
\maketitle

\section{Introduction}

Let $G$ be a discrete group and let $X$ be a $G$-flow, i.e. a compact Hausdorff space equipped with an action of $G$ by homeomorphisms. The corresponding reduced crossed product $\rC(X) \times_\lambda G$ is a C*-algebra that encodes a great deal of information about the action of $G$ on $X$. Beginning essentially with von Neumann, a great deal of literature has been generated in an attempt to understand the precise relationship between the C*-algebraic structure of the reduced crossed product and the dynamical structure of the underlying action.

Perhaps the most fundamental problem along these lines concerns the simplicity of reduced crossed products. Specifically, the problem asks for a characterization of the simplicity of $\rC(X) \times_\lambda G$ in terms of the dynamical structure of the $G$-action on $X$. Recall that a C*-algebra is simple if it has no non-trivial closed two-sided ideals. 

An obvious necessary condition for the simplicity of the reduced crossed product is the minimality of $X$, i.e. the absence of any non-trivial proper $G$-subflows, since a $G$-invariant non-empty proper open subset $U \subseteq X$ yields the non-trivial closed two-sided ideal $\rC_0(U) \times_\lambda G$ of $\rC(X) \times_\lambda G$. 

Sufficient conditions for the simplicity of the reduced crossed product have been known for some time. Elliott \cite{Ell1980} proved that $\rC(X) \times_\lambda G$ is simple if $X$ is minimal and topologically free, meaning that the set of fixed points of every non-trivial element in $G$ has empty interior. This generalized earlier results of Effros-Hahn \cite{EH1967}. Moreover, for amenable $G$, the converse was proved by Kawamura-Tomiyama \cite{KT1990} (see also subsequent work of Archbold-Spielberg \cite{AS1994}). 

However, if $G$ is non-amenable and $X$ is minimal, then topological freeness of $X$ is generally far from necessary for the simplicity of the reduced crossed product. Indeed, if $X$ is the trivial $G$-flow consisting of a single point, then the reduced crossed product coincides with the reduced C*-algebra $\ca_\lambda(G)$. For non-amenable $G$, although $X$ is never topologically free, it turns out that $\ca_\lambda(G)$ is frequently simple. A group with this property is said to be {\em C*-simple}.

The first example of a C*-simple group was discovered by Powers \cite{Pow1975}. He showed that the free group on two generators is C*-simple. Subsequently, many more examples of C*-simple groups were found using variants of Powers' original argument. 

The first complete characterization of C*-simplicity was established by Kalantar and the second author \cite{KK2017}*{Theorem 1.5} utilizing boundary-theoretic techniques. Specifically, they proved that $G$ is C*-simple if and only if the action of $G$ on its Furstenberg boundary $\fb G$ is (topologically) free. In further work, the second author established an ``intrinsic'' group-theoretic characterization of C*-simplicity \cite{Ken2020}*{Theorem 1.1}. Specifically, he proved that $G$ is C*-simple if and only if it has no confined amenable subgroups (see Section \ref{sec:chabauty}), which is equivalent to the absence of any non-trivial amenable uniformly recurrent subgroup (see Section \ref{sec:urs}). 

The first complete characterization of the simplicity of reduced crossed products was established by Kawabe \cite{Kaw2017}*{Theorem 1.7}. Generalizing \cite{Ken2020}, he proved that if $X$ is a minimal $G$-flow, then the reduced crossed product $\rC(X) \times_\lambda G$ is simple if and only if it has no confined amenable subgroups contained in the stabilizer subgroup of a point in $X$, which is equivalent to the absence of any non-trivial amenable uniformly recurrent subgroup dominated by the stabilizer uniformly recurrent subgroup of the action with respect to the partial order on uniformly recurrent subgroups (see Section \ref{sec:urs}). 

In practice, Kawabe's criterion can be difficult to verify. In order to establish the simplicity of the reduced crossed product, it is typically much easier to verify a remarkably simple sufficient condition obtained by Ozawa \cite{Oza2014}*{Theorem 14} (see also \cite{BKKO2017}*{Proposition 7.6}). Specifically, he proved that if the stabilizer subgroup of any point in $X$ is C*-simple, then $\rC(X) \times_\lambda G$ is simple. He further asked if the converse holds.

\begin{question*}[Ozawa]
Let $G$ be a discrete group and let $X$ be a minimal $G$-flow. Is the reduced crossed product $\rC(X) \times_\lambda G$ simple if and only if there is a point $x \in X$ such that the stabilizer subgroup $G_x$ is C*-simple?
\end{question*}

Recently, Kalantar-Hartman made significant progress on Ozawa's question \cite{HK2026}*{Theorem A}. They proved that if $G$ is countable and $X$ is a minimal $G$-flow such that the reduced crossed product $\rC(X) \times_\lambda G$ is simple, then there is a point $x \in X$ such that the stabilizer subgroup $G_x$ has trivial amenable radical. Although the triviality of the amenable radical is not equivalent to C*-simplicity in general, it is known to be equivalent for a large class of groups.

In this paper, we will completely resolve Ozawa's question. For countable groups, we will prove that Ozawa's question has an affirmative answer. Strikingly, however, we will demonstrate that the answer to Ozawa's question can be negative for an uncountable discrete group. As far as the authors are aware, this is the first time that this kind of divergence between the behaviour of countable and uncountable discrete groups has been observed in this setting.

The following result is the main result in this paper. It provides an affirmative answer to Ozawa's question for countable groups. The result is proved in Section \ref{sec:urs}.

\begin{thmintro} \label{thmintro:main}
Let $G$ be a countable group and let $X$ be a minimal $G$-flow. The following are equivalent:
\begin{enumerate}
\item The reduced crossed product $\rC(X) \times_\lambda G$ is simple;
\item There is a point $x \in X$ with C*-simple stabilizer subgroup $G_x$;
\item A generic point $x \in X$ has a C*-simple stabilizer subgroup $G_x$, meaning that the subset
\[
\{x \in X : G_x \text{ is C*-simple} \}
\]
is comeager in $X$.
\end{enumerate}
\end{thmintro}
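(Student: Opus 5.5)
The implications (3)$\Rightarrow$(2) and (2)$\Rightarrow$(1) are immediate. For the first, a comeager subset of the compact Hausdorff (hence Baire) space $X$ is non-empty. The second is Ozawa's theorem \cite{Oza2014}*{Theorem 14}. All the work is in (1)$\Rightarrow$(3), and I would route it through Kawabe's criterion \cite{Kaw2017}*{Theorem 1.7} together with the intrinsic characterization \cite{Ken2020}*{Theorem 1.1}. Concretely, assume $\rC(X)\times_\lambda G$ is simple. Then no stabilizer $G_x$ contains a subgroup that is amenable and confined in $G$. The goal is to show that for generic $x$, the group $G_x$ has no amenable subgroup that is confined \emph{in $G_x$}. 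That would make $G_x$ C*-simple by \cite{Ken2020}.

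The first step is to identify the generic points. The map $x\mapsto G_x$ from $X$ to the Chabauty space $\mathrm{Sub}(G)$ is upper semicontinuous. Since $G$ is countable, its set of continuity points $X_0$ is a dense $G_\delta$, hence comeager. On $X_0$ the closure of the image is the stabilizer uniformly recurrent subgroup $\S_X$, and for $x\in X_0$ we have $G_x\in \S_X$. The key feature I want from $x\in X_0$ is the following: whenever $g_i x\to y$, every Chabauty limit of $g_iG_xg_i^{-1}$ is contained in $G_y$, with equality when $y\in X_0$. Countability of $G$ is used exactly here, because $X_0$ is comeager only since $G$ is countable; this is presumably why the uncountable counterexample exists.

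The second step, for $x\in X_0$, is to suppose $H\le G_x$ is amenable and confined in $G_x$, and to produce an amenable subgroup confined in $G$ and lying inside some stabilizer, contradicting Kawabe. Confinement in $G_x$ means there is a finite set $F\subseteq G_x\setminus\{e\}$ with $hHh^{-1}\cap F\neq\emptyset$ for all $h\in G_x$. I would consider the $G$-orbit closure of $H$ in $\mathrm{Sub}(G)$, or better the closure of the pairs $(gHg^{-1},gx)$ in $\mathrm{Sub}(G)\times X$. Inside it I would pick a minimal subsystem and obtain an amenable URS $\A$ (amenability is closed in Chabauty) whose members lie in the stabilizers of the corresponding points, by upper semicontinuity. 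What remains is to show $\A$ is non-trivial, i.e.\ $\{e\}\notin \A$, so that it is dominated by $\S_X$ and contradicts Kawabe's URS formulation.

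This non-triviality is the main obstacle. Confinement in $G_x$ only controls conjugation by elements of $G_x$, whereas limits are taken along arbitrary $g\in G$. My first attempt would be to use the continuity of stabilizers at $x$. If $g_iHg_i^{-1}\to\{e\}$, I would pass to a minimal subflow and use minimality of $X$ to bring $g_ix$ back near $x$. Continuity at $x$ then forces the conjugators to act on $G_x$, approximately, as elements of the normalizer of $G_x$ modulo a small error. With this I would try to replace $g_i$ by elements of $G_x$ times something fixing $F$ near the identity, contradicting confinement by $F$. If that direct argument fails, the fallback is to run the boundary argument. I would take the Furstenberg boundary $\fb G_x$ and induce it to a $G$-space over the orbit of $x$, arranged so that non-freeness of $G_x$ on $\fb G_x$ yields a $G$-boundary-type cover of $X$ with amenable stabilizers inside $G_x$ that are not eventually trivial. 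From there I would appeal to the Kalantar–Kennedy freeness criterion relative to $X$.
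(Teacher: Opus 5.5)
\textbf{Verdict: genuine gap in (1)$\Rightarrow$(3).} Your treatment of (3)$\Rightarrow$(2) and (2)$\Rightarrow$(1) is fine. The gap is exactly the step you flag yourself: showing that the amenable URS you extract from an amenable $H \le G_x$ confined in $G_x$ is non-trivial.

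Under (1), Kawabe's criterion already says that \emph{every} amenable subgroup of every stabilizer has $\{e\}$ in its $G$-orbit closure. So the whole content of the implication is deriving a contradiction from confinement inside $G_x$ alone, and your proposal has no mechanism for this. If $F$ is a $G_x$-confinement witness for $H$, then $gHg^{-1}$ is confined in $G_{gx}$ only with witness $gFg^{-1}$. These witnesses can leave every finite set, and that is precisely how $\{e\}$ enters the orbit closure.

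Continuity of the stabilizer map at $x$ gives $g_iG_xg_i^{-1}\to G_x$ when $g_ix\to x$. This controls the conjugates of $G_x$ as subsets of $G$, but says nothing about where $g_iHg_i^{-1}$ sits inside them. There is no sense in which $g_i$ becomes approximately an element of $G_x$ or of its normalizer, so you cannot move $F$ back. The fallback via inducing $\fb G_x$ is not yet an argument. Note also that you are aiming at a pointwise statement (every continuity point has a C*-simple stabilizer), which is stronger than what the paper proves. Working with one non-C*-simple stabilizer at a time gives you no uniformity.

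\textbf{The missing idea} is to get a \emph{uniform} witness by working on the whole URS, and to choose the amenable subgroup equivariantly.
\begin{enumerate}
\item The paper shows that $\W_F$ is Chabauty-closed. Here $\W_F$ is the set of subgroups $H$ for which $F\cap H$ is an $H$-confinement witness for some amenable subgroup of $H$.
\item Suppose no member of $\S_X$ is C*-simple. By the confined-amenable-subgroup characterization of C*-simplicity, $\S_X=\bigcup_F(\S_X\cap\W_F)$, a countable union of closed sets.
\item Baire gives one of these sets with non-empty interior. Minimality and compactness then give finitely many conjugates covering $\S_X$, hence a single finite $E$ with $\S_X\subseteq\W_E$.
\item Take $H\in\S_X$ with $H\le G_x$ and $K$ in the Furstenberg URS $\A_{\fb H}$. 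Use the equivariance $g\A_{\fb H}g^{-1}=\A_{\fb gHg^{-1}}$, and the fact that every member of $\A_{\fb H'}$ has witness $E\cap H'$ whenever $H'\in\W_E$. Together these give $gKg^{-1}\cap E\neq\emptyset$ for all $g\in G$.
\item So $K\le G_x$ is amenable and confined in $G$, contradicting Kawabe.
\end{enumerate}

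This only produces one C*-simple member of $\S_X$. Genericity needs a second ingredient you also lack: C*-simple subgroups form a $G_\delta$ in $\operatorname{Sub}(G)$, again from closedness of the $\W_F$ and countability of $G$. By minimality they are then comeager in $\S_X$. The paper transfers this to $X$ through the minimal flow $\overline{\{(x,G_x):x\in X_0\}}$, using category-preserving factor maps and the topological zero-one law. In particular, countability is used in the Baire step and the $G_\delta$ step, not only to make $X_0$ comeager.
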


We demonstrate that the answer to Ozawa's question can be negative for an uncountable discrete group in Example \ref{exam:uncountable}.

In addition to this introduction, this paper has two sections. In Section~\ref{sec:chabauty}, we consider the Chabauty space of subgroups of a discrete group. We prove that for a countable group, the subset of C*-simple subgroups is $G_\delta$ in the Chabauty space. In Section~\ref{sec:urs}, we consider uniformly recurrent subgroups associated to minimal flows. We prove the existence of C*-simple subgroups in certain uniformly recurrent subgroups arising in this way when the reduced crossed product is simple. This is the key fact needed to prove the main result. Finally, we provide an example demonstrating that the answer to Ozawa's question can be negative for an uncountable discrete group.

\section{The Chabauty space of subgroups} \label{sec:chabauty}

Let $G$ be a discrete group and let $\operatorname{Sub}(G)$ be the set of subgroups of $G$, equipped with the Chabauty topology and the conjugation action of $G$. If $\operatorname{Sub}(G)$ is identified with a subset of $\{0,1\}^G$, then the Chabauty topology coincides with the relative product topology. In particular, $\operatorname{Sub}(G)$ is compact, and hence is a $G$-flow. Note that if, in addition, $G$ is countable, then $\operatorname{Sub}(G)$ is metrizable.

A subgroup $H \in \operatorname{Sub}(G)$ is {\em $G$-confined} if the closure of its $G$-orbit does not contain the trivial subgroup. For a non-empty finite subset $F \subseteq G \setminus \{e\}$, define $\V_F \subseteq \operatorname{Sub}(G)$ by
\[
\V_F = \{H \in \operatorname{Sub}(G) : H \cap F = \emptyset \}.
\]
Then the family $\{\V_F\}_F$ for $F$ as above is a neighborhood basis of the trivial subgroup. It follows that $H \in \operatorname{Sub}(G)$ is confined if and only if there is $\V_F$ such that $g H g^{-1} \notin \V_F$ for all $g \in G$, or equivalently, if and only if $g H g^{-1} \cap F \ne \emptyset$ for all $g \in G$.

\begin{defn} \label{defn:confinement-witness}
Let $G$ be a discrete group. For a subgroup $H \in \operatorname{Sub}(G)$, we will say that a non-empty finite subset $F \subseteq G \setminus \{e\}$ is a {\em $G$-confinement witness for $H$} if $gHg^{-1} \cap F \ne \emptyset$ for all $g \in G$. 
\end{defn}

\begin{rem} \label{rem:confinement-witness}
It follows from the above discussion that a subgroup $H \in \operatorname{Sub}(G)$ is $G$-confined if and only if it has some $G$-confinement witness.
\end{rem}

\begin{lem} \label{lem:confinement-witness}
Let $G$ be a discrete group and let $F \subseteq G \setminus \{e\}$ be a non-empty finite subset. Let $\S \subseteq \operatorname{Sub}(G)$ be a subset such that $F$ is a $G$-confinement witness for every subgroup in $\S$. Then $F$ is a $G$-confinement witness for every subgroup in the closure $\overline{\S}$ of $\S$.
\end{lem}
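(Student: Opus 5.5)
The idea is to show that the set of subgroups for which $F$ is a $G$-confinement witness is closed in $\operatorname{Sub}(G)$. Since $\S$ sits inside this set, so does $\overline{\S}$. Write
\[
\W_F = \{H \in \operatorname{Sub}(G) : gHg^{-1} \cap F \ne \emptyset \text{ for all } g \in G\}.
\]
I will express $\W_F$ as an intersection of closed sets built from sets that are clopen in the Chabauty topology.

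First, for each $f \in G$, the set $\U_f = \{H \in \operatorname{Sub}(G) : f \in H\}$ is clopen. Indeed, it is the restriction to $\operatorname{Sub}(G)$ of a cylinder set in $\{0,1\}^G$, and the Chabauty topology is the relative product topology. Since $F$ is finite, the set $\{H : H \cap F \ne \emptyset\} = \bigcup_{f \in F} \U_f$ is a finite union of clopen sets, so it is clopen, and in particular closed.

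Next, conjugation by a fixed $g \in G$ is a homeomorphism of $\operatorname{Sub}(G)$, because $\operatorname{Sub}(G)$ is a $G$-flow. More concretely, $g^{-1}Fg$ is a finite set and $gHg^{-1} \cap F \ne \emptyset$ if and only if $H \cap g^{-1}Fg \ne \emptyset$. So for each $g$, the set $\{H : gHg^{-1} \cap F \ne \emptyset\} = \bigcup_{f \in F} \U_{g^{-1}fg}$ is closed by the previous step. Then $\W_F$ is the intersection of these sets over all $g \in G$, so it is closed.

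Finally, $\S \subseteq \W_F$ by hypothesis, so $\overline{\S} \subseteq \W_F$. This is exactly the claim.

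There is no real obstacle here. The only point needing care is that the finiteness of $F$ is what makes the finite union of the $\U_f$ closed; the intersection over a possibly infinite $G$ causes no trouble, since arbitrary intersections of closed sets are closed. If one prefers nets, the same argument works directly. Suppose $H_i \to H$ with $H_i \in \S$, and fix $g$. For each $i$ pick $f_i \in F \cap gH_ig^{-1}$. Since $F$ is finite, some $f \in F$ occurs frequently along the net. Pointwise convergence in $\{0,1\}^G$ at the coordinate $g^{-1}fg$ then gives $g^{-1}fg \in H$, so $f \in gHg^{-1} \cap F$.
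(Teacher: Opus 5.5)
Your proof is correct and is essentially the paper's argument. Both rest on the fact that $\{H : H \cap F \neq \emptyset\}$, the complement of the clopen neighborhood $\V_F$ of the trivial subgroup, is clopen, applied after conjugating by each fixed $g$; the paper writes this with nets, as in your final remark, while your main version presents it as closedness of an intersection of clopen sets (and note the paper already uses $\W_F$ and $\U$ for other objects, so you would want different names).
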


\begin{proof}
Let $(H_i)_i$ be a net of subgroups in $\S$ converging to a subgroup $H \in \overline{\S}$. For $g \in G$, $g H_i g^{-1} \cap F \ne \emptyset$. Equivalently, letting $\V_F \subseteq \operatorname{Sub}(G)$ denote the neighborhood of the trivial subgroup corresponding to $F$, $g H_i g^{-1} \notin \V_F$. Since $\V_F$ is clopen and $\lim g H_i g^{-1} = g H g^{-1}$, it follows that $g H g^{-1} \notin \V_F$. Equivalently, $g H g^{-1} \cap F \ne \emptyset$. Since $g \in G$ was arbitrary, it follows that $F$ is a $G$-confinement witness for $H$.
\end{proof}

\begin{lem} \label{lem:orbit}
Let $G$ be a group and let $F \subseteq G \setminus \{e\}$ be a non-empty finite subset. If $F$ is a $G$-confinement witness for a subgroup $H \in \operatorname{Sub}(G)$, then $F$ is a $G$-confinement witness for every subgroup in the closure of the $G$-orbit of $H$.
\end{lem}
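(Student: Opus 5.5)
The plan is to reduce the statement to Lemma \ref{lem:confinement-witness}, taking $\S$ to be the $G$-orbit $\{gHg^{-1} : g \in G\}$ of $H$. Once we know that $F$ is a $G$-confinement witness for every conjugate of $H$, the closure statement in Lemma \ref{lem:confinement-witness} gives exactly the conclusion for every subgroup in $\overline{\S}$, which is the orbit closure of $H$.

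So the only thing to verify is that the property of having $F$ as a $G$-confinement witness is invariant under conjugation. Fix $k \in G$ and consider $K = kHk^{-1}$. For any $g \in G$ we have
\[
gKg^{-1} = (gk)H(gk)^{-1}.
\]
Since $F$ is a $G$-confinement witness for $H$ and $gk \in G$, Definition \ref{defn:confinement-witness} gives $(gk)H(gk)^{-1} \cap F \ne \emptyset$. Hence $gKg^{-1} \cap F \neq \emptyset$ for all $g \in G$, so $F$ is a $G$-confinement witness for $K$.

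Lemma \ref{lem:confinement-witness} then applies to $\S$ and finishes the proof. There is no real obstacle. The only point needing care is that the conjugation action on $\operatorname{Sub}(G)$ is by the maps $H \mapsto gHg^{-1}$, so the orbit is exactly the set $\S$ used above. The closure is taken in the Chabauty topology, as in Lemma \ref{lem:confinement-witness}, and that lemma already handles the limiting argument via the clopen set $\V_F$.
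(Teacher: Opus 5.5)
Your proposal is correct and is essentially the paper's proof. The paper also notes that $F$ witnesses confinement for every conjugate of $H$ and then applies Lemma \ref{lem:confinement-witness} to the orbit; you have simply written out the one-line check $g(kHk^{-1})g^{-1} = (gk)H(gk)^{-1}$ that the paper leaves as ``clearly.''
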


\begin{proof}
If $F$ is a $G$-confinement witness for $H$, then clearly $F$ is a $G$-confinement witness for every subgroup in the $G$-orbit of $H$. The result now follows from Lemma \ref{lem:confinement-witness}.
\end{proof}

\begin{defn} \label{defn:witnesses}
Let $G$ be a discrete group. For a non-empty finite subset $F \subseteq G \setminus \{e\}$, we will let $\W_F \subseteq \operatorname{Sub}(G)$ denote the set of subgroups $H \leq G$ with the property that $F \cap H$ is an $H$-confinement witness for some amenable subgroup of $H$.
\end{defn}

\begin{lem} \label{lem:confined-amenable-witnessed-closed}
Let $G$ be a discrete group and let $F \subseteq G \setminus \{e\}$ be a non-empty finite subset. The set $\W_F$ is closed in the Chabauty topology on $\operatorname{Sub}(G)$.
\end{lem}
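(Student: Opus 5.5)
We argue with nets, following the pattern of Lemma \ref{lem:confinement-witness}. Let $(H_i)_i$ be a net in $\W_F$ converging to $H \in \operatorname{Sub}(G)$. For each $i$, fix an amenable subgroup $A_i \le H_i$ such that $F \cap H_i$ is an $H_i$-confinement witness for $A_i$. We will produce an amenable subgroup $A \le H$ for which $F \cap H$ is an $H$-confinement witness.

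First, pass to convergent subnets. Since $\operatorname{Sub}(G)$ is compact, we may assume $A_i \to A$ for some $A \in \operatorname{Sub}(G)$. Since $F$ is finite and $H_i \to H$, for large $i$ we have $F \cap H_i = F \cap H$, because membership of each of the finitely many elements of $F$ is eventually constant. Next, we check that $A \le H$: if $a \in A$, then $a \in A_i \subseteq H_i$ eventually, so $a \in H$.

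The key step is that $A$ is amenable. Every finitely generated subgroup $\langle a_1,\dots,a_n\rangle$ of $A$ is contained in $A_i$ for large $i$, so it is a subgroup of an amenable group and hence amenable. Since amenability is preserved under directed unions, $A$ is amenable. This also shows that the set of amenable subgroups is closed in the Chabauty space.

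It remains to verify the confinement condition. Fix $h \in H$. Then $h \in H_i$ for large $i$, so $hA_ih^{-1} \cap (F \cap H_i) \ne \emptyset$, that is, $hA_ih^{-1} \cap (F\cap H) \neq \emptyset$ for large $i$. Because $F \cap H$ is finite, some fixed $f \in F \cap H$ lies in $hA_ih^{-1}$ for cofinally many $i$. Since $hA_ih^{-1} \to hAh^{-1}$ and $\{K : f \in K\}$ is clopen, it follows that $f \in hAh^{-1}$. In particular $F \cap H$ is non-empty (because $e \notin F$), so it is a legitimate $H$-confinement witness for $A$. Hence $H \in \W_F$.

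The main obstacle is bookkeeping: the quantifier over $h \in H$ involves elements that lie in $H_i$ only eventually, and the witness set $F \cap H_i$ changes with $i$. Both issues are handled by the finiteness of $F$ together with the clopen cylinder sets of the Chabauty topology.
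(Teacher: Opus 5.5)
Your proof is correct and follows the same route as the paper. You pass to a subnet with $A_i \to A$, check $A \le H$ and that $A$ is amenable, and carry the confinement condition to the limit using clopen cylinder sets. The only differences are minor: you prove directly that the amenable subgroups form a Chabauty-closed set, which the paper simply cites, and you use a pigeonhole argument over the finite set $F \cap H$ where the paper applies the clopenness of $\V_F$ directly.
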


\begin{proof}
Let $(H_i)_i$ be a net of subgroups in $\W_F$ converging to a subgroup $H \in \operatorname{Sub}(G)$. For each $i$, let $K_i \leq H_i$ be an amenable subgroup such that $F \cap H_i$ is an $H_i$-confinement witness for $K_i$. By passing to a subnet, we can assume that the net $(K_i)_i$ also converges to a subgroup $K \in \operatorname{Sub}(G)$. Note that $K \leq H$. Furthermore, since the subset of $\operatorname{Sub}(G)$ consisting of amenable subgroups is closed in the Chabauty topology, $K$ is amenable.

The proof that $F \cap H$ is an $H$-confinement witness for $K$ is similar to the proof of Lemma \ref{lem:confinement-witness}. Fix $h \in H$. Then $h \in H_i$ eventually, which implies that $h K_i h^{-1} \cap F \ne \emptyset$ eventually, since $F \cap H_i$ is an $H_i$-confinement witness for $K_i$. Equivalently, letting $\V_F \subseteq \operatorname{Sub}(G)$ denote the neighborhood of the trivial subgroup corresponding to $F$, $h K_i h^{-1} \notin \V_F$. Since $\V_F$ is clopen and $\lim h K_i h^{-1} = hKh^{-1}$, it follows that $h K h^{-1} \notin \V_F$. Equivalently, $h K h^{-1} \cap F \ne \emptyset$. Since $h \in H$ was arbitrary, it follows that $F \cap H$ is an $H$-confinement witness for $K$. Hence $H \in \W_F$.
\end{proof}

For a discrete group $G$, the set of C*-simple subgroups of $G$ is not necessarily closed nor open in the Chabauty topology on $\operatorname{Sub}(G)$, as the following example demonstrates.

\begin{exam}
Let $G = \bF_2$, where $\bF_2$ denotes the free group over $\{a,b\}$. 

To see that the subset of C*-simple groups is not closed in the Chabauty topology on $\operatorname{Sub}(G)$, let $H = \langle a \rangle \in \operatorname{Sub}(G)$ and for $n \geq 1$, let $H_n = \langle a, b^n a b^{-n} \rangle \in \operatorname{Sub}(G)$. Then $H \cong \bZ$ is not C*-simple since, e.g. it is abelian, while each $H_n$ is C*-simple, since $G_n \cong \bF_2$ and $\bF_2$ is C*-simple \cite{Pow1975}. However, it is easy to verify that $\lim H_n = H$ in the Chabauty topology on $\operatorname{Sub}(G)$.

To see that the subset of C*-simple groups is not open in the Chabauty topology, for $n \geq 1$, let $K_n = \langle a^n \rangle \in \operatorname{Sub}(G)$. Then it is easy to verify that $\lim K_n = \{e\}$ in the Chabauty topology on $\operatorname{Sub}(G)$.
\end{exam}

One of the main results in \cite{Ken2020}*{Theorem 1.1} is that a discrete group is C*-simple if and only if it has no amenable confined subgroups. Utilizing this characterization, we now prove that for a countable group $G$, the set of C*-simple subgroups of $G$ is $G_\delta$ in the Chabauty topology on $\operatorname{Sub}(G)$.

\begin{prop} \label{prop:g-delta}
Let $G$ be a countable group. The subset of C*-simple subgroups of $G$ is $G_\delta$ in the Chabauty topology on $\operatorname{Sub}(G)$.
\end{prop}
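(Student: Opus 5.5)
The plan is to express the set of C*-simple subgroups as a countable intersection of open sets, using the sets $\W_F$ from Definition \ref{defn:witnesses}. By \cite{Ken2020}*{Theorem 1.1}, a subgroup $H \leq G$ is C*-simple if and only if $H$ has no $H$-confined amenable subgroups. Remark \ref{rem:confinement-witness}, applied to the group $H$ in place of $G$, says that an amenable subgroup $K \leq H$ is $H$-confined if and only if it has some $H$-confinement witness. Such a witness is a non-empty finite subset $E \subseteq H \setminus \{e\}$.

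The first step is to show that
\[
\{H \in \operatorname{Sub}(G) : H \text{ is not C*-simple}\} = \bigcup_F \W_F,
\]
where $F$ ranges over non-empty finite subsets of $G \setminus \{e\}$. Suppose $H$ is not C*-simple, and pick an amenable $H$-confined $K \leq H$ with $H$-confinement witness $E \subseteq H \setminus \{e\}$. Setting $F = E$ gives $F \cap H = E$, so $H \in \W_F$. Conversely, suppose $H \in \W_F$. Then $F \cap H$ is an $H$-confinement witness for some amenable $K \leq H$, so $K$ is $H$-confined and $H$ is not C*-simple.

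One subtlety needs checking here. If $F \cap H = \emptyset$, then the witness condition $hKh^{-1} \cap \emptyset \neq \emptyset$ fails vacuously, so no such $K$ exists. Thus the empty intersection never produces membership, and the equivalence is consistent with the requirement in Definition \ref{defn:confinement-witness} that witnesses be non-empty. I would verify that the definition of $\W_F$ is read this way.

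Next, Lemma \ref{lem:confined-amenable-witnessed-closed} gives that each $\W_F$ is closed. Since $G$ is countable, there are only countably many non-empty finite subsets $F$. The complement of the union is therefore the countable intersection $\bigcap_F (\operatorname{Sub}(G) \setminus \W_F)$ of open sets, which is $G_\delta$.

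There is no serious obstacle, since the closedness of $\W_F$ is the substantive part and is already done. The only care needed is the bookkeeping above: witnesses for $H$ must be subsets of $H$, and the intersection $F \cap H$ correctly captures every possible witness as $F$ varies over finite subsets of $G$.
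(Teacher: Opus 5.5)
Your proof is correct and is essentially the paper's argument. Like you, the paper combines Kennedy's characterization of C*-simplicity with Remark~\ref{rem:confinement-witness} to write the C*-simple subgroups as $\bigcap_F \bigl(\operatorname{Sub}(G)\setminus \W_F\bigr)$, then concludes from Lemma~\ref{lem:confined-amenable-witnessed-closed} and the countability of $G$. Your explicit checks only spell out what the paper leaves implicit: that taking $F=E$ recovers every witness, and that $F\cap H=\emptyset$ never gives membership in $\W_F$ (it fails because $e \in H$, rather than vacuously).
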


\begin{proof}
Letting $\W_F$ be as in Definition \ref{defn:witnesses}, \cite{Ken2020}*{Theorem 1.1} and Remark~\ref{rem:confinement-witness} imply that a subgroup $H \leq G$ is C*-simple if and only if $H \notin \W_F$ for every non-empty finite subset $F \subseteq G \setminus \{e\}$. It follows that the subset of C*-simple subgroups of $G$ is given by
\[
\bigcap_F \operatorname{Sub}(G) \setminus \W_F,
\]
where the intersection is taken over non-empty finite subsets $F \subseteq G \setminus \{e\}$. The result now follows from Lemma \ref{lem:confined-amenable-witnessed-closed} and the countability of $G$. 
\end{proof}

\section{Uniformly recurrent subgroups and crossed products} \label{sec:urs}

The notion of a uniformly recurrent subgroup was introduced by Glasner-Weiss \cite{GW2015}. A $G$-subflow $\U \subseteq \operatorname{Sub}(G)$ is a {\em uniformly recurrent subgroup} (or URS for short) of $G$ if it is minimal, i.e. if it admits no proper subflows. A URS $\U$ is {\em amenable} if every subgroup in $\U$ is amenable.

Let $X$ be a $G$-flow. For a point $x \in X$, let $G_x \leq G$ denote the corresponding stabilizer subgroup
\[
G_x = \{g \in G : gx = x\}.
\]
The stabilizer map
\[
X \to \operatorname{Sub}(G) : x \to G_x
\]
is not necessarily continuous (see e.g. \cite{LBMB2018}*{Lemma 2.2}). However, it is upper semicontinuous, so if $G$ is countable, then letting $X_0 \subseteq X$ denote the set of continuity points, $X_0$ is always a dense $G_\delta$ (see \cite{LBMB2018}*{Proposition 2.4}).

For minimal $X$, Glasner-Weiss associated a URS to $X$, called the {\em stabilizer URS for $X$}, which we will denote by $\S_X$. Specifically, $\S_X$ is the closure of the set $\{G_x : x \in X_0\} \subseteq \operatorname{Sub}(G)$ \cite{GW2015}*{Proposition 1.2}. 

There is a natural partial order on the set of URSs of $G$ introduced by Le Boudec-Matte Bon \cite{LBMB2018}*{Definition 2.11}. Specifically, for URSs $\U$ and $\V$ of $G$, $\U \preccurlyeq \V$ if there is $H \in \U$ and $K \in \V$ with $H \leq K$. By \cite{LBMB2018}*{Proposition 2.14}, $\U \preccurlyeq \V$ if and only if for every subgroup $H \in \U$ there is a subgroup $K \in \V$ such that $H \leq K$, and for every subgroup $K' \in \V$ there is a subgroup $H' \in \U$ such that $H' \leq K'$.

Kawabe proved \cite{Kaw2017}*{Theorem 7.4} that for a minimal $G$-flow $X$ with corresponding stability URS $\S_X$ of $G$, there is a unique amenable URS $\A_X$ of $G$ such that $\A_X \preccurlyeq \S_X$ and whenever $\U$ is another amenable URS of $G$ with $\U \preccurlyeq \S_X$, then $\U \preccurlyeq \A_X$.

A particularly important example of a URS obtained in this way is the {\em Furstenberg URS} $\A_{\fb G}$ of $G$. In fact, $\A_{\fb G} = \S_{\fb G}$ \cite{LBMB2018}*{Proposition 2.21}, and if $\U \subseteq \operatorname{Sub}(G)$ is any amenable URS of $G$, then $\U \preccurlyeq \A_{\fb G}$ \cite{LBMB2018}*{Theorem 2.16}.

\begin{lem} \label{lem:furstenberg-urs-confinement-witness}
Let $G$ be a discrete group. For a non-empty finite subset $F \subseteq G \setminus \{e\}$, let $\W_F \subseteq \operatorname{Sub}(G)$ be defined as in Definition \ref{defn:witnesses}. Then for a subgroup $H \in \operatorname{Sub}(G)$, $H \in \W_F$ if and only if $F \cap H$ is an $H$-confinement witness for every subgroup in the Furstenberg URS $\A_{\fb H}$ of $H$.
\end{lem}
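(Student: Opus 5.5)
The plan is to prove the two directions separately, working throughout inside the group $H$. Write $F_H = F \cap H$. The facts used are: Lemma \ref{lem:orbit} applied to $H$ in place of $G$; the fact that $\A_{\fb H}$ is an amenable URS of $H$ dominating every amenable URS of $H$ \cite{LBMB2018}*{Theorem 2.16}; and the characterization of $\preccurlyeq$ from \cite{LBMB2018}*{Proposition 2.14}.

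For the reverse implication, assume $F_H$ is an $H$-confinement witness for every subgroup in $\A_{\fb H}$. The set $\A_{\fb H}$ is nonempty, since it is a minimal subflow of the compact flow $\operatorname{Sub}(H)$. Its members are amenable, because $\A_{\fb H}$ is an amenable URS. So any $K \in \A_{\fb H}$ is an amenable subgroup of $H$ with $F_H$ as an $H$-confinement witness, which gives $H \in \W_F$. A degenerate case needs care: if $F_H = \emptyset$, the hypothesis says $hKh^{-1}\cap \emptyset \neq \emptyset$, which is impossible, so the statement holds vacuously on both sides. I will use Definition \ref{defn:confinement-witness} literally, allowing $F_H$ to be empty, in which case it witnesses nothing.

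For the forward implication, assume $H \in \W_F$. Let $K \le H$ be amenable with $F_H$ an $H$-confinement witness for $K$.
\begin{enumerate}
\item Let $\overline{O}$ be the closure of the $H$-orbit of $K$ in $\operatorname{Sub}(H)$. By Lemma \ref{lem:orbit} (for the group $H$), $F_H$ witnesses confinement for every subgroup in $\overline{O}$.
\item Pick a minimal subflow $\U \subseteq \overline{O}$. Chabauty limits of amenable subgroups are amenable, as used in Lemma \ref{lem:confined-amenable-witnessed-closed}, so $\U$ is an amenable URS of $H$, and $F_H$ witnesses confinement for all of its members.
\item By \cite{LBMB2018}*{Theorem 2.16}, $\U \preccurlyeq \A_{\fb H}$. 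By \cite{LBMB2018}*{Proposition 2.14}, every $L \in \A_{\fb H}$ therefore contains some $L' \in \U$.
\item Since $L' \le L$, we have $hL'h^{-1} \subseteq hLh^{-1}$ for all $h \in H$. Because $hL'h^{-1}\cap F_H \ne \emptyset$, it follows that $hLh^{-1} \cap F_H \neq \emptyset$. Hence $F_H$ witnesses confinement for every $L \in \A_{\fb H}$.
\end{enumerate}

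The only step that needs attention is step 3. It requires the "for every member of the larger URS there is a smaller member below it" half of \cite{LBMB2018}*{Proposition 2.14}, applied to $H$ rather than $G$. Otherwise the argument is bookkeeping with the monotonicity of the witness condition under passing to larger subgroups. I would also check that the Chabauty topology on $\operatorname{Sub}(H)$ agrees with the subspace topology from $\operatorname{Sub}(G)$. This makes the orbit-closure and amenability-closure arguments legitimate inside $H$ without requiring $H$ to be closed in any sense.
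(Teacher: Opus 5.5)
Your proof is correct and follows the same argument as the paper. The reverse direction is immediate from the definition. For the forward direction, you pass to an amenable URS $\U$ inside the $H$-orbit closure of $K$, apply Lemma \ref{lem:orbit}, and use $\U \preccurlyeq \A_{\fb H}$. The paper leaves the last step implicit, and you spell it out via \cite{LBMB2018}*{Proposition 2.14} together with monotonicity of the witness condition under passing to larger subgroups.
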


\begin{proof}
If every subgroup $K \in \A_{\fb H}$ has $H$-confinement witness $F \cap H$, then $H \in \W_F$ by definition. Conversely, suppose $H \in \W_F$. Then there is an amenable subgroup $K \leq H$ with $H$-confinement witness $F \cap H$. Since the subset of $\operatorname{Sub}(G)$ consisting of amenable subgroups is closed in the Chabauty topology, it follows from Zorn's lemma that the closure of the $H$-orbit of $K$ contains an amenable URS $\U$ of $H$. By Lemma \ref{lem:orbit}, $F \cap H$ is an $H$-confinement witness for every subgroup in $\U$. The result now follows from the fact that $\U \preccurlyeq \A_{\fb H}$.
\end{proof}

\begin{prop} \label{prop:urs}
Let $G$ be a countable group and let $X$ be a minimal $G$-flow such that the reduced crossed product $\rC(X) \times_\lambda G$ is simple. Then every URS $\U$ of $G$ with $\U \preccurlyeq \S_X$ contains a C*-simple subgroup.
\end{prop}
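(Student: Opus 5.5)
We argue by contradiction using the Baire category theorem on $\U$. The set of C*-simple subgroups is $G_\delta$ by Proposition \ref{prop:g-delta}. Suppose $\U \preccurlyeq \S_X$ contains no C*-simple subgroup. Then $\U \subseteq \bigcup_F \W_F$, where the union runs over the countably many non-empty finite subsets $F \subseteq G\setminus\{e\}$. Each $\W_F \cap \U$ is closed by Lemma \ref{lem:confined-amenable-witnessed-closed}, and $\U$ is a compact (metrizable) space. By the Baire category theorem, some $\W_F \cap \U$ has non-empty interior in $\U$.

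Next we use minimality of $\U$ to spread this open set around. Note that $\W_F$ is not $G$-invariant, but $g\W_F g^{-1} = \W_{gFg^{-1}}$. Minimality and compactness give finitely many $g_1,\dots,g_n$ such that the translates of the open set cover $\U$. Setting $F' = \bigcup_i g_i F g_i^{-1}$, every $H \in \U$ lies in $\W_{F'}$; here we use that enlarging $F$ preserves the witness property, since $(F\cap H) \subseteq (F' \cap H)$. Consequently, for every $H \in \U$, Lemma \ref{lem:furstenberg-urs-confinement-witness} shows that $F' \cap H$ is an $H$-confinement witness for every subgroup of the Furstenberg URS $\A_{\fb H}$.

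The goal is now an amenable subgroup of a stabilizer that is confined in $G$ relative to $\S_X$, which contradicts Kawabe's criterion \cite{Kaw2017}*{Theorem 1.7}. That criterion says simplicity forces the absence of amenable subgroups of some $G_x$ that are confined in $G$; equivalently $\A_X$ is trivial. The plan is to assemble, uniformly over $H \in \U$, amenable subgroups $K_H \in \A_{\fb H}$ with $K_H \cap F' \neq \emptyset$ after every $H$-conjugation. From these we produce an amenable URS of $G$ dominated by $\U$, hence by $\S_X$, which is nontrivial. Concretely, choose $H \in \U$ and $K \in \A_{\fb H}$, and consider the $G$-orbit closure of $K$ in $\operatorname{Sub}(G)$. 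It consists of amenable subgroups, each contained in some member of $\U$, since limits of $gKg^{-1} \le gHg^{-1}$ lie below limits of $gHg^{-1}$, which are in $\U$. Take a URS $\V$ inside this orbit closure; then $\V$ is amenable and $\V \preccurlyeq \U \preccurlyeq \S_X$. Kawabe's theorem then forces $\V = \{\{e\}\}$.

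The main obstacle is showing $\V$ is nontrivial, that is, $\{e\}$ is not in the $G$-orbit closure of $K$. We only know confinement under $H$-conjugation, not $G$-conjugation. For $g \in G$, the conjugate $gKg^{-1}$ sits inside $gHg^{-1} \in \U$, and we need $F'$ to detect it. We would try to choose $K$ equivariantly in $H$, using the canonical nature of $\A_{\fb H}$: $g\A_{\fb H}g^{-1} = \A_{\fb(gHg^{-1})}$. The uniform witness $F'$ valid for all $H \in \U$ then applies to $gHg^{-1}$ and its Furstenberg URS, which contains $gKg^{-1}$, so $gKg^{-1} \cap F' \ne \emptyset$. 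Thus $F'$ is a $G$-confinement witness for $K$. By Lemma \ref{lem:orbit}, $\V$ avoids a neighborhood of $\{e\}$, so $\V \neq \{\{e\}\}$, which is the required contradiction.
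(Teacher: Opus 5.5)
Your proof is correct and follows essentially the paper's route. It applies Baire category to $\U = \bigcup_F (\U \cap \W_F)$, uses minimality and compactness to get one uniform witness $F'$ with $\U \subseteq \W_{F'}$, and uses the naturality $g\A_{\fb H}g^{-1} = \A_{\fb(gHg^{-1})}$ to upgrade the $H$-confinement of $K \in \A_{\fb H}$ to $G$-confinement with witness $F'$. The only difference is the final appeal to Kawabe, where the paper picks $H \le G_x$ and applies the confined-amenable-subgroup form of \cite{Kaw2017}*{Theorem 1.7} to $K$ directly, while you pass to an amenable URS $\V$ in the orbit closure of $K$ (nontrivial by Lemma \ref{lem:orbit}, with $\V \preccurlyeq \U \preccurlyeq \S_X$) and use the equivalent URS form, which is equally valid. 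Also, no ``equivariant choice'' of $K$ is needed, since the naturality identity alone gives $gKg^{-1} \in \A_{\fb(gHg^{-1})}$.
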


\begin{proof}
Let $\U$ be a URS of $G$ with $\U \preccurlyeq \S_X$. Suppose for the sake of contradiction that no subgroup in $\U$ is C*-simple.

For a non-empty finite subset $F \subseteq G \setminus \{e\}$, let $\W_F \subseteq \operatorname{Sub}(G)$ be as in Definition \ref{defn:witnesses} and let $\U_F = \U \cap \W_F$. Then each $\U_F$ is relatively closed by Lemma \ref{lem:confined-amenable-witnessed-closed}. It follows from \cite{Ken2020}*{Theorem 1.1} and Remark \ref{rem:confinement-witness} that $\U = \cup_F \U_F$, where the union is taken over all non-empty finite subsets $F \subseteq G \setminus \{e\}$. Since $G$ is countable, this is a countable union of closed subsets. Hence by the Baire category theorem, there is a non-empty finite subset $F_0 \subseteq G \setminus \{e\}$ such that $\U_{F_0}$ has non-empty interior.

By the compactness and minimality of $\U$, there is $g_1,\ldots,g_n \in G$ such that
\[
\U = \bigcup_{i=1}^n g_i \U_{F_0} g_i^{-1} = \bigcup_{i=1}^n \U_{g_i^{-1} F_0 g_i},
\]
where we have used the fact that for $g \in G$, $g \U_{F_0} g^{-1} = \U_{g^{-1} F_0 g}$. Letting $E = \cup_{i=1}^n g_i^{-1} F_0 g_i$, it follows that $\U = \U_E$.

Since $\U \preccurlyeq \S_X$, there is $H \in \U$ with $H \leq G_x$ for some $x \in X_0$. Fix $H$ and fix an amenable subgroup $K \in \A_{\fb H}$. Since $H \in \U_E$, Lemma \ref{lem:furstenberg-urs-confinement-witness} implies that $K$ has $H$-confinement witness $E \cap H$.

For $g \in G$, $g \A_{\fb H} g^{-1} = \A_{\fb g H g^{-1}}$, so $gKg^{-1} \in \A_{\fb g H g^{-1}}$. Since $gHg^{-1} \in \U_E$, applying Lemma \ref{lem:furstenberg-urs-confinement-witness} again implies that $gKg^{-1}$ has $gHg^{-1}$-confinement witness $E \cap gHg^{-1}$. In particular, $g K g^{-1} \cap E \ne \emptyset$. Since $g \in G$ was arbitrary, we obtain that $K$ is a confined amenable subgroup of $G$. Since $K \leq H \leq G_x$, it now follows from \cite{Kaw2017}*{Theorem 1.7} that the reduced crossed product $\rC(X) \times_\lambda G$ is not simple, giving a contradiction.
\end{proof}

\begin{lem} \label{lem:dense-g-delta}
Let $G$ be a countable group and let $X$ be a minimal $G$-flow with corresponding stabilizer URS $\S_X$. Let $X_0 \subseteq X$ denote the dense $G_\delta$ subset of continuity points for the stabilizer map. The subset
\[
\{G_x : x \in X_0\}
\]
is comeager in $\S_X$.
\end{lem}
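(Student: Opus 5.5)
Let $\sigma : X_0 \to \operatorname{Sub}(G)$, $x \mapsto G_x$, denote the restricted stabilizer map; it is continuous by definition of $X_0$. Its image $Y = \sigma(X_0) = \{G_x : x \in X_0\}$ is a $G$-invariant subset of $\S_X$, since $G_{gx} = gG_xg^{-1}$ and $X_0$ is $G$-invariant (continuity points of an equivariant map form an invariant set). By definition $Y$ is dense in $\S_X$. The plan is to show that $Y$ contains a dense $G_\delta$ subset of $\S_X$. As a fallback, it suffices to show that $Y$ is non-meager with the Baire property; a $G$-invariant subset of a minimal flow with the Baire property that is non-meager is comeager, because a nonempty open set where it is comeager can be translated by finitely many group elements to cover $\S_X$.

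The cleanest route is to show that $Y$ is itself a $G_\delta$ in $\S_X$. Since $G$ is countable, $\operatorname{Sub}(G)$ is compact metrizable. Write $X_0 = \bigcap_n O_n$ with $O_n$ open and dense. By upper semicontinuity, the oscillation of $\sigma$ at $x$ is governed by which elements $g$ satisfy $g \in G_y$ for $y$ near $x$ with $g \notin G_x$. I would identify $Y$ as
\[
Y = \bigcap_{g \in G} \bigl( \{H \in \S_X : g \in H\} \cup \{H \in \S_X : g \notin K \text{ for all } K \in \S_X \text{ near } H\} \bigr),
\]
or more concretely show that the continuity points are exactly the $x$ such that $G_x$ belongs to a certain $G_\delta$ set. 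Since this description may be hard to pin down, the more robust approach is the following.

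Use the map $\sigma$ together with the Kuratowski--Ulam / open mapping idea. Consider the closure $\tilde X$ of the graph $\{(x,G_x) : x \in X_0\}$ in $X \times \S_X$. This is a $G$-flow. Its projection $\pi_1 : \tilde X \to X$ is a factor map that is injective over $X_0$, so $\tilde X$ is a minimal almost one-to-one extension of $X$. The second projection $\pi_2 : \tilde X \to \S_X$ is a surjective factor map between minimal flows. In a minimal flow, $\pi_1^{-1}(X_0)$ is a dense $G_\delta$ of $\tilde X$. The key fact is that factor maps between minimal metrizable flows are semi-open, meaning images of nonempty open sets have nonempty interior. Semi-open maps send comeager sets to non-meager sets, or more precisely send dense $G_\delta$ sets to sets that are comeager in a nonempty open set. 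The image $\pi_2(\pi_1^{-1}(X_0)) = Y$ is analytic, hence has the Baire property. If it were meager, then its preimage under the semi-open map $\pi_2$ would be meager. That preimage contains $\pi_1^{-1}(X_0)$, which is comeager, a contradiction. Hence $Y$ is non-meager with the Baire property and $G$-invariant, so it is comeager by the fallback argument above.

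The main obstacle is the step that preimages of meager sets under $\pi_2$ are meager. This relies on semi-openness of factor maps between minimal flows. I would prove it directly: given a nonempty open $U \subseteq \tilde X$, minimality gives finitely many $g_i$ with $\bigcup g_i U = \tilde X$. Then $\S_X = \bigcup g_i \pi_2(\overline{U})$, so by Baire some $\pi_2(\overline U)$ has interior. Choosing $U$ with small closure inside a given open set yields semi-openness. Then pulling back nowhere dense sets gives nowhere dense sets.
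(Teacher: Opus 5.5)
Your proposal is correct and follows the paper's proof essentially step for step. It uses the same auxiliary minimal flow $\tilde X$ (the closure of the graph over $X_0$, with $\pi_1^{-1}(X_0)=\tilde X_0$ a dense $G_\delta$), analyticity of $\pi_2(\tilde X_0)$ to obtain the Baire property, the fact that $\pi_2$ pulls back meager sets to meager sets, and the topological zero-one law on the minimal flow $\S_X$. The only differences are that you prove the semi-openness of $\pi_2$ and the zero-one law directly, where the paper cites them; your unfinished attempt to show that $Y$ is itself a $G_\delta$ is not needed and can be deleted.
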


\begin{proof}
Let $\S_{X,0} = \{G_x : x \in X_0\}$ and let $\tilde{X} \subseteq X \times \S_X$ denote the closure of the set
\[
\tilde{X}_0 := \{(x,G_x) : x \in X_0\}.
\]
Then by \cite{GW2015}*{Proposition 1.2}, $\tilde{X}$ is a minimal $G$-flow with respect to the diagonal action. Let $\pi_1 : \tilde{X} \to X$ denote the projection map onto the first coordinate. Then $\tilde{X}_0 = \pi_1^{-1}(X_0)$, since if a net $((x_i, G_{x_i}))_i$ in $\tilde{X}_0$ converges to $(x,H) \in \tilde{X}$ with $x \in X_0$, then the fact that $x$ is a continuity point for the stabilizer map implies that $H = G_x$. Furthermore, since $X_0$ is $G_\delta$ in $X$ and $\pi_1$ is continuous, $\tilde{X}_0$ is a dense $G_\delta$ in $\tilde{X}$.

Let $\pi_2 : \tilde{X} \to \S_X$ denote the projection map onto the second coordinate. Since $\S_{X,0} = \pi_2(\tilde{X}_0)$, $\S_{X,0}$ is the continuous image of a Borel set, and in particular is an analytic subset of $\S_X$. Hence it has the Baire property.

Since $\S_X$ is minimal and $\S_{X,0}$ is $G$-invariant and has the Baire property, the topological-dynamical zero-one law implies that $\S_{X,0}$ is either meager or comeager in $\S_X$ (see e.g. \cite{GK1998}). However, since $\tilde{X}_0 \subseteq \pi_2^{-1}(\S_{X,0})$ and $\tilde{X}_0$ is a dense $G_\delta$ in $\tilde{X}$, $\pi_2^{-1}(\S_{X,0})$ is not meager. The fact that $\pi_2$ is a factor map of minimal $G$-flows implies that it is category-preserving (see e.g. \cite{Mel2016}*{Definition 4.6}), meaning in particular that the preimage of a meager set under $\pi_2$ is meager. Therefore, we conclude that $\S_{X,0}$ is comeager.
\end{proof}

\begin{proof}[Proof of Theorem \ref{thmintro:main}]
The implication (2) $\Rightarrow$ (1) is \cite{Oza2014}*{Theorem 14}, and the implication (3) $\Rightarrow$ (2) is clear.

For the implication (1) $\Rightarrow$ (3), suppose that the reduced crossed product $\rC(X) \times_\lambda G$ is simple. Then Proposition~\ref{prop:urs} implies that the stabilizer URS $\S_X$ of $X$ contains a C*-simple subgroup. It follows from the minimality of $\S_X$ and Proposition~\ref{prop:g-delta} that the set of C*-simple subgroups of $\S_X$ is a dense $G_\delta$ in $\S_X$, and in particular is comeager. Also, Lemma~\ref{lem:dense-g-delta} implies that the set
\[
\S_{X,0} := \{G_x : x \in X_0\}
\]
is comeager in $\S_X$. Therefore, the intersection
\[
\S_{X,0}' := \{G_x : x \in X_0,\ G_x \text{ is C*-simple}\}
\]
is comeager in $\S_X$.

Since, as in the proof of Lemma~\ref{lem:dense-g-delta}, the projection map $\pi_2 : \tilde{X} \to \S_X$ is category-preserving, $\pi_2^{-1}(\S_{X,0}') \subseteq \tilde{X}$ is comeager in $\tilde{X}$. Also, it follows as in the proof of Lemma \ref{lem:dense-g-delta} that the set
\[
\tilde{X}_0 = \{(x,G_x) : x \in X_0\}
\]
is comeager in $\tilde{X}$. Therefore, the intersection
\[
\tilde{X}_0' := \{(x,G_x) : x \in X_0,\ G_x \text{ is C*-simple} \}
\]
is also comeager in $\tilde{X}$.

The subset $X_0 \subseteq X$ of continuity points is a dense $G_\delta$ in $X$. Also, since the stabilizer map is upper semicontinuous, it follows from Proposition~\ref{prop:g-delta} that the preimage under the stabilizer map of the C*-simple subgroups in $\operatorname{Sub}(G)$ is Borel. Therefore, the intersection
\[
X_0' := \{x \in X_0 : G_x \text{ is C*-simple}\}
\]
is a Borel subset of $X$, and in particular has the Baire property.

Since $X$ is minimal, and $X_0'$ is $G$-invariant and has the Baire property, the topological-dynamical zero-one law implies that $X_0'$ is either meager or comeager in $X$ (see e.g. \cite{GK1998}). However, since $\tilde{X}_0' \subseteq \pi_1^{-1}(X_0')$ and $\tilde{X}_0'$ is comeager in $\tilde{X}$, $\pi_1^{-1}(X_0')$ is not meager. The fact that $\pi_1$ is a factor map of minimal flows implies that it is category-preserving, meaning in particular that the preimage of a meager set under $\pi_1$ is meager. Therefore, we conclude that $X_0'$ is comeager in $X$.
\end{proof}

Matte Bon-Tsankov \cite{MBT2020}*{Theorem 1.1} proved that every URS of $G$ arises as the stabilizer URS of a minimal $G$-flow. For countable $G$, combining this result with Theorem \ref{thmintro:main} yields the following interesting consequence for the structure of certain URSs of $G$.

\begin{cor} \label{cor:urs}
 Let $G$ be a countable group and let $X$ be a minimal $G$-flow such that the reduced crossed product $\rC(X) \times_\lambda G$ is simple. Let $\U$ be a URS of $G$ with $\U \preccurlyeq \S_X$. Then a generic subgroup in $\U$ is C*-simple, meaning that the set
\[
\{H \in \U : H \text{ is C*-simple} \}
\]
is comeager. Consequently, if a generic subgroup in a URS of $G$ is C*-simple, then the same is true of any URS that it dominates.  
\end{cor}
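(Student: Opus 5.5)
The plan is to combine Proposition~\ref{prop:urs} with Proposition~\ref{prop:g-delta} and minimality of $\U$. No Baire-property argument on $X$ is needed here, because the conclusion concerns $\U$ directly.

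First, since $\rC(X) \times_\lambda G$ is simple and $\U \preccurlyeq \S_X$, Proposition~\ref{prop:urs} gives a C*-simple subgroup $H_0 \in \U$. Let $\mathcal{C} \subseteq \operatorname{Sub}(G)$ denote the set of C*-simple subgroups. By Proposition~\ref{prop:g-delta}, $\mathcal{C}$ is $G_\delta$ in $\operatorname{Sub}(G)$, so $\mathcal{C} \cap \U$ is $G_\delta$ in $\U$. Conjugation is an isomorphism of groups, so $\mathcal{C}$ is invariant under the conjugation action of $G$. Hence $\mathcal{C} \cap \U$ contains the orbit $\{gH_0g^{-1} : g \in G\}$, which is dense in $\U$ by minimality. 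A dense $G_\delta$ in the compact Hausdorff (indeed metrizable) space $\U$ is comeager, and this proves the first claim.

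For the ``consequently'' statement, let $\V$ be a URS in which a generic subgroup is C*-simple, and let $\U \preccurlyeq \V$. By \cite{MBT2020}*{Theorem 1.1} there is a minimal $G$-flow $X$ with $\S_X = \V$. Since $\V$ has a comeager, hence non-empty, set of C*-simple subgroups, we want to deduce that $\rC(X) \times_\lambda G$ is simple. By Theorem~\ref{thmintro:main} it suffices to find a point $x \in X$ with $G_x$ C*-simple. Lemma~\ref{lem:dense-g-delta} says $\{G_x : x \in X_0\}$ is comeager in $\S_X = \V$. Intersecting it with the comeager set of C*-simple subgroups of $\V$ gives a non-empty set, so some $x \in X_0$ has $G_x$ C*-simple. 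Then (2) $\Rightarrow$ (1) of Theorem~\ref{thmintro:main} gives simplicity, and the first part of the corollary applied to $\U \preccurlyeq \S_X$ finishes the proof.

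The only delicate point is this last step. It requires the flow produced by Matte Bon--Tsankov to have stabilizer URS exactly $\V$ in the sense used here, namely the closure of $\{G_x : x \in X_0\}$. It also requires Lemma~\ref{lem:dense-g-delta} to apply to that flow, so that the generic subgroup of $\V$ is genuinely a stabilizer. Both hold because the stabilizer URS is defined via $X_0$ and $G$ is countable. Everything else is formal.
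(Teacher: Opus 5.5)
Your proof is correct, but your first part takes a more direct route than the paper's.

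\textbf{First part.} The paper first uses the Matte Bon--Tsankov realization theorem to write $\U = \S_Y$ for a minimal flow $Y$. It then uses Kawabe's criterion to show that $\rC(Y) \times_\lambda G$ is simple: any amenable URS $\A \preccurlyeq \S_Y$ satisfies $\A \preccurlyeq \S_X$ by transitivity, hence is trivial. Only then does it run the opening of (1) $\Rightarrow$ (3), i.e.\ Proposition~\ref{prop:urs} for $\S_Y$, followed by Proposition~\ref{prop:g-delta} and minimality. You notice that Proposition~\ref{prop:urs} is already stated for an arbitrary URS $\U \preccurlyeq \S_X$. So you apply it directly and skip both the realization theorem and Kawabe's corollary. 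The rest of your argument is the same as the paper's: C*-simplicity is conjugation invariant, the orbit is dense, and a dense $G_\delta$ is comeager. The paper's detour does yield an extra fact the corollary does not need: the crossed product $\rC(Y) \times_\lambda G$ is also simple, so Theorem~\ref{thmintro:main} applies to $Y$ as well.

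\textbf{``Consequently'' statement.} The paper only says to apply the realization theorem again. Your argument is a correct way to fill this in. You realize $\V = \S_X$, then intersect two comeager subsets of the compact space $\V$: the C*-simple subgroups, and $\{G_x : x \in X_0\}$ from Lemma~\ref{lem:dense-g-delta}. The intersection is nonempty by Baire, which gives a point with C*-simple stabilizer, and Ozawa's implication (2) $\Rightarrow$ (1) gives simplicity.

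The ``delicate point'' you flag is not actually an issue. Matte Bon--Tsankov realize URSs as stabilizer URSs in exactly the Glasner--Weiss sense used here. Lemma~\ref{lem:dense-g-delta} holds for every minimal flow of a countable group.
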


\begin{proof}
Let $\U$ be a URS of $G$ with $\U \preccurlyeq \S_X$. Then by \cite{MBT2020}*{Theorem~1.1}, there is a minimal $G$-flow $Y$ such that $\U = \S_Y$. If $\A$ is an amenable URS of $G$ with $\A \preccurlyeq \S_Y$, then $\A \preccurlyeq \S_X$. Since $\rC(X) \times_\lambda G$ is simple by assumption, \cite{Kaw2017}*{Corollary~6.4} implies that $\A$ is trivial. Since $\A$ was arbitrary, applying \cite{Kaw2017}*{Corollary~6.4} again implies that $\rC(Y) \times_\lambda G$ is simple.  The result now follows as in the beginning of the proof of (1) $\Rightarrow$ (3) in Theorem~\ref{thmintro:main}. The final statement can be obtained by again applying the realization theorem \cite{MBT2020}*{Theorem~1.1}.
\end{proof}

For the case of countable C*-simple $G$, invoking a result from \cite{BKKO2017} allows us to apply Corollary \ref{cor:urs} to every URS of $G$.

\begin{cor}
Let $G$ be a countable C*-simple group and let $\U$ be a URS of $G$. Then a generic subgroup in $\U$ is C*-simple, meaning that the set
\[
\{H \in \U : H \text{ is C*-simple} \}
\]
is comeager.
\end{cor}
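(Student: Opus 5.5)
The plan is to apply Corollary \ref{cor:urs} with $X = \fb G$, the Furstenberg boundary of $G$. First, since $G$ is C*-simple, the action of $G$ on $\fb G$ is free by \cite{KK2017}*{Theorem 1.5}; in fact, as $\fb G$ is extremally disconnected, topological freeness upgrades to freeness. Hence every stabilizer $G_x$ is trivial, and $\S_{\fb G} = \{\{e\}\}$. This URS is too small to dominate an arbitrary URS, so $\fb G$ itself cannot serve as $X$. Instead I need a minimal flow $X$ with $\rC(X) \times_\lambda G$ simple and with $\S_X$ dominating every URS of $G$. The natural candidate is the trivial flow $X = \{pt\}$. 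Then $\rC(X) \times_\lambda G = \ca_\lambda(G)$, which is simple by hypothesis, and $\S_X = \{G\}$.

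Next I would check that $\U \preccurlyeq \{G\}$ for every URS $\U$ of $G$. By the definition of the partial order, this only requires some $H \in \U$ with $H \leq G$, which is trivially true. Corollary \ref{cor:urs} then gives directly that the set of C*-simple subgroups in $\U$ is comeager.

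So the argument appears to reduce entirely to Corollary \ref{cor:urs} with the one-point flow. The paper instead mentions invoking a result of \cite{BKKO2017}. I expect that result is used for the non-degenerate input that a URS can be C*-simple at all: for example, that every nontrivial normal subgroup of a C*-simple group is C*-simple, or that $\S_X$ and the crossed product behave as expected for the trivial flow. I would double-check that Corollary \ref{cor:urs} places no hidden requirement on $X$, such as $X_0$ being well behaved or the stabilizer URS being defined via continuity points. For the one-point flow, $X_0 = X$ and $\S_X = \{G\}$, so the hypotheses hold.

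The main obstacle is confirming that the proof of Corollary \ref{cor:urs} really goes through when $\S_X = \{G\}$. That proof realizes $\U = \S_Y$ via \cite{MBT2020} and uses \cite{Kaw2017} to transfer simplicity from $X$ to $Y$. Since every amenable URS $\A$ satisfies $\A \preccurlyeq \{G\}$, simplicity of $\ca_\lambda(G)$ forces all amenable URSs to be trivial. This is exactly the no-amenable-URS characterization of C*-simplicity, which is presumably the \cite{BKKO2017} input. Kawabe's criterion then makes $\rC(Y) \times_\lambda G$ simple, and the beginning of the proof of (1) $\Rightarrow$ (3) in Theorem \ref{thmintro:main} yields the comeager conclusion.
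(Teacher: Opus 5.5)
Your argument is correct, but it takes a different route from the paper's. The paper fixes $\U$ and realizes it as $\U = \S_X$ for a minimal $G$-flow $X$ via \cite{MBT2020}*{Theorem~1.1}. It then uses \cite{BKKO2017}*{Theorem~7.1}: for C*-simple $G$, $\rC(X)\times_\lambda G$ is simple for \emph{every} minimal $G$-flow $X$. This lets Corollary~\ref{cor:urs} apply with $\U = \S_X$.

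You instead apply Corollary~\ref{cor:urs} to the one-point flow. Its crossed product is $\ca_\lambda(G)$, and its stabilizer URS $\{G\}$ dominates every URS. In effect you are using the final statement of Corollary~\ref{cor:urs} with the top URS $\{G\}$. This removes the need for \cite{BKKO2017} altogether. Your guesses about its role are therefore beside the point, and they are also inaccurate: the paper does not use it for the normal-subgroup fact, and the no-amenable-URS characterization comes from \cite{Ken2020}. The opening detour through $\fb G$ can simply be dropped.

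Your route can be made leaner still. Proposition~\ref{prop:urs} applied to the one-point flow already gives a C*-simple subgroup in every URS $\U$, since for the trivial flow Kawabe's criterion reduces to \cite{Ken2020}*{Theorem~1.1}. Proposition~\ref{prop:g-delta} and the minimality of $\U$ then make the C*-simple subgroups of $\U$ a non-empty, conjugation-invariant, hence dense $G_\delta$ in $\U$. This needs no appeal to \cite{MBT2020} at all.

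The paper's route, by contrast, stays within the flow picture. The same inputs show that every minimal flow of a countable C*-simple group falls under Theorem~\ref{thmintro:main}.
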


\begin{proof}
Let $\U$ be a URS for $G$. Then by \cite{MBT2020}*{Theorem~1.1}, there is a minimal $G$-flow $X$ such that $\U = \S_X$. Since $G$ is C*-simple, \cite{BKKO2017}*{Theorem~7.1} implies that the reduced crossed product $\rC(X) \times_\lambda G$ is simple. The result now follows from Corollary \ref{cor:urs}.
\end{proof}

The following example demonstrates that Theorem~\ref{thmintro:main} can fail for uncountable discrete groups. The second author is grateful to Sven Raum for discussions about $\mathrm{PSL}_2(\bR)$. 

\begin{exam} \label{exam:uncountable}
The Furstenberg boundary of the locally compact group $\mathrm{PSL}_2(\bR)$ is the real projective line $\mathbb{RP}^1$. Letting $G$ be the (uncountable) discretization of $\mathrm{PSL}_2(\bR)$, we obtain a boundary action of $G$ on $\mathbb{RP}^1$ after composing with the identity map $G \to \mathrm{PSL}_2(\bR)$. An element in $G \setminus \{e\}$ has at most two fixed points in $\mathbb{RP}^1$, so the action is topologically free. Hence by \cite{Ell1980}*{Theorem 3.2}, the reduced crossed product $\rC(\mathbb{RP}^1) \times_\lambda G$ is simple.

On the other hand, for the point $\infty \in \mathbb{RP}^1$, the corresponding stabilizer subgroup $G_\infty$ is solvable, and hence not C*-simple. Moreover, the $G$-action on $\mathbb{RP}^1$ is transitive, so the stabilizer of every point in $\mathbb{RP}^1$ is conjugate to $G_\infty$, and therefore not C*-simple.
\end{exam}

\end{document}